 \theoremstyle{plain}
 \newtheorem{thm}{Theorem}[section]
 \numberwithin{equation}{section} %% Comment out for sequentially-numbered
 \numberwithin{figure}{section} %% Comment out for sequentially-numbered
 \theoremstyle{plain}
 \newtheorem{prop}[thm]{Proposition}
 \theoremstyle{plain}
 \theoremstyle{plain}
 \newtheorem{theorem}[thm]{Theorem}
 \theoremstyle{plain}
 \newtheorem{corollary}[thm]{Corollary}
\theoremstyle{plain}
 \newtheorem{remark}[thm]{Remark}
 \theoremstyle{plain}
 \newtheorem{lemma}[thm]{Lemma}
\def\A{{\mathcal{A}}}
\def\B{{\mathcal {B}}}
\def\L{{\mathcal{L}}}
\def\C{{\mathcal{C}}}
\def\M{{\mathcal{M}}}
\def\Forall{\quad \hbox{ for all }}
\def\<{{\langle}}
\def\>{{\rangle}}
\def\R{\mathbb{R}}
\def\bu{{\bf u}}
\def\bv{{\bf v}}
\def\bv{{\mathbf v}}
\def\bu{{\mathbf u}}
\def\bV{{\mathbf V}}
\def\du#1#2#3{\overset{#3}{\underset{#2}{#1}}}
\begin{document}
\bibliographystyle{plain}  
\title[Notes on Symmetric SPP]
{A note on  stability and optimal approximation estimates \\ for symmetric saddle point systems}

\author{Constantin Bacuta}
\address{University of Delaware,
Department of Mathematics,
501 Ewing Hall 19716}
\email{bacuta@math.udel.edu}

\keywords{saddle point system, stability estimates, spectral estimates, golden ratio}
\subjclass[2000]{74S05, 74B05, 65N22, 65N55}
%\thanks{This work was partially supported  by NSF, DMS-0713125.}%E-mail address: bacuta@math.udel.edu}

\begin{abstract} 

We  establish sharp  well-posedness and approximation estimates  for variational  saddle point systems at the continuous level. The main results of this note  have been  known to be true only in the finite dimensional case. Known spectral results from the discrete case are reformulated and proved using a functional analysis view, making the proofs in both cases, discrete and continuous, less technical than the known discrete approaches.  
We focus on analyzing the special case when the  form  $a(\cdot, \cdot)$  is  bounded, symmetric, and  coercive, and the mixed form $b(\cdot, \cdot)$  is bounded and satisfies a standard $\inf-\sup$  or LBB   condition. We characterize the spectrum of the symmetric  operators that describe the problem at the continuous level.  For a particular choice of the inner product on the product space of $b(\cdot, \cdot)$, we prove that the spectrum of the  operator representing the system at continuous level is $\left \{\frac{1-\sqrt{5}}{2}, 1,  \frac{1+\sqrt{5}}{2} \right \}$.  As consequences of the spectral description, we find the minimal length interval that contains the ratio between the norm of the data and the norm of the solution, and  prove explicit approximation estimates that  depend  only on  the continuity constant and the continuous and the discrete  $\inf-\sup$ condition constants. 
\end{abstract}
%\end{document}

\maketitle
\section{Notation and standard properties}\label{section: notation}

 The  existing literature on stability and approximation estimates for symmetric Saddle Point (SP) systems is quite rich for 
both continuous and discrete levels. While at discrete level the estimates can be done using eigenvalue analysis of symmetric matrices and consequently are optimal, at the continuous level, the estimates are presented as inequalities depending on related constants %rather than spectral range evaluations of operators, 
 and consequently are not optimal.  In this note, we will  establish optimal estimates at the continuous level, that can be viewed as generalizations of results at the discrete level. The new spectral estimates provide more insight into the behavior of the general symmetric SP problems. In addition, information about the continuous  spectrum and the techniques used to characterize it can lead to efficient  analysis of  iterative methods for SP systems. % that rely on algorithms designed to approximate directly  the solution of the  continuous problem.
 
 Towards this end, we let  $\bV$ and  $Q$ be two Hilbert spaces with  inner products given by {\it symmetric} bilinear forms $a(\cdot, \cdot)$ and $(\cdot, \cdot)$ respectively, with the corresponding induced norms $|\cdot|_{\bV} =|\cdot| =a(\cdot, \cdot)^{1/2}$ 
and $\|\cdot\|_Q=\|\cdot\| =(\cdot, \cdot)^{1/2}$. The dual pairings on $\bV^* \times \bV$ and 
$Q^* \times Q$ are denoted by $\langle \cdot, \cdot \rangle$. Here, 
$\bV^* $ and $Q^*$ denote the duals of $\bV$ and $Q$, respectively.  
With the inner products  $a(\cdot, \cdot)$ and $(\cdot, \cdot)$, we associate the operators \\
$\A:V\to V^*$ and $\C:Q\to Q^*$ defined by 
\[
\langle \A \bu,\bv \rangle=a(\bu, \bv)  \Forall \bu ,\bv \in \bV 
\]
and
\[
\langle \C p,q\rangle=(p, q) \Forall  \ p, q \in Q. 
\]
The operators $\A^{-1}:V^*\to V$ and $\C^{-1}:Q^*\to Q$ are  the Riesz-canonical representation operators and satisfy
\begin{equation}\label{eqn:A-1}
a(\A^{-1}\bu^*, \bv)= \<\bu^*, \bv\>, \ \  |\A^{-1} \bu^*|_{\bV} = \|\bu^*\|_{\bV^*},    \bu^*  \in \bV^*,  \bv \in \bV,
\end{equation} 
\begin{equation}\label{eqn:C-1}
(\C^{-1} p^*, q) = \<p^*, q\>, \ \ \| \C^{-1} p^* \| =\|p^*\|_{Q^*},   \ p^* \in Q^*, q \in Q. 
\end{equation}

Next, we suppose that $ b(\cdot, \cdot) $ is a continuous 
bilinear form on $\bV \times Q$, satisfying the inf-sup condition. More precisely, we assume that
 \begin{equation} \label{inf-sup_a}
 \du{\sup}{p \in Q}{} \ \du {\sup} {\bv \in \bV}{} \ \frac {b(\bv, p)}{\|p\| \ |\bv|} =M <\infty, \ \text{and} \ \ 
     \du{\inf}{p \in Q}{} \ \du {\sup} {\bv \in \bV}{} \ \frac {b(\bv, p)}{\|p\| \ 
|\bv|} =m>0.
\end{equation} 
Throughout this paper, the ``inf"  and ``sup" are  taken  over nonzero vectors. 
With the form $b$, we associate the linear operators 
$B:V\to Q^*$ and \\  $B^*:Q\to V^*$ defined by
\[
\<B \bv,q\>=b(\bv, q)= \langle B^*q, \bv \rangle  \Forall \bv \in \bV, \ q \in Q.
\]
Let $\bV_0=\ker(B) \subset \bV$, be  the kernel of $B$. %, i.e.,

 For a bounded linear operator $T:X \to Y$ between two Hilbert spaces $X$ and $Y$, we denote the Hilbert transpose of $T$ by $T^t$. If $X=Y$, we say that $T$ is symmetric if $T=T^t$. For a bounded  linear  operator $T:X\to X$ , we denote  the  spectrum of the operator $T$ by $\sigma(T)$.

Next, we review   the Schur complement operator with the notation introduced in \cite{B09}. First, we notice that the operators $\C^{-1}B:\bV \to Q$  and $\A^{-1} B^*:Q\to \bV$ are  symmetric  to each other, i.e., 
\begin{equation}\label{eq:sym}
(\C^{-1}B \bv, q) =a(\bv, \A^{-1}B^*q), \ \bv \in \bV, q \in Q.
\end{equation}
Consequently, 
 $
 (\C^{-1} B)^t = \A^{-1} B^*$  and $(\A^{-1} B^*)^t= \C^{-1} B$. The  Schur complement  on $Q$  is the operator 
 $S_0:=\C^{-1}B \A^{-1} B^*: Q \to Q$. The   operator $S_0$ is {symmetric and  positive definite  on $Q$}, satisfying 
\begin{equation}\label{eq:sigmaS0}
\sigma(S_0) \subset [m^2, M^2],  \ \text{and}  \ m^2, M^2 \in  \sigma(S_0). 
\end{equation}
A proof of \eqref{eq:sigmaS0} can be found in \cite{B06, B09}.
Consequently,  for any $ \ p \in Q$,  \begin{equation}\label{A-1Biso}
M\|p\| \geq   \|p\|_{S_0}:=(S_0 p, p)^{1/2} = |\A^{-1} B^* p|_{\bV} \geq m \|p\|. 
\end{equation}

For $f\in \bV^*$, $g\in Q^*$, we consider the following variational problem: \\
 Find $(\bu, p)  \in \bV \times Q$ such that 
\begin{equation}\label{abstract:variational}
\begin{array}{lcll}
a(\bu,\bv) & + & b( \bv, p) &= \langle {\bf f},\bv \rangle \ \Forall  \bv \in \bV,\\
b(\bu,q) & & & =\<g,q\>  \  \Forall  q \in Q, 
\end{array}
\end{equation}
where  the  bilinear form  $b:\bV \times Q \to \R$ satisfies  
\eqref{inf-sup_a}. It is known that the above variational problem  has unique solution for any $f\in \bV^*$, $g\in Q^*$ (see some of the original proofs in \cite{A-B, babuska73, brezzi74}). Further  results on stability and well poseddness of the problem, can be found in many publications, e.g., 
\cite{braess, brenner-scott, falk-osborn80, arnold-brezzi-fortin84, girault-raviart, brezzi-fortin,  ern-guermond, monk91,  quarteroni-valli-94, brezzi2003, elman-silvester-wathen,  MardalWinther2011, B09}.  
%\cite{A-B, brezzi2003, , MardalWinther2011, zulehner2011}

 %A survey about numerical solutions of discrete saddle point problems (SPP)   was done by Benzi, Golub and Liesen, in \cite{benzi-golub-liesen}. 

The  operator version of 
the problem  \eqref{abstract:variational} is: \\
Find $(\bu, p)  \in \bV \times Q$ such that
\begin{equation}\label{abstract:operational}
\begin{array}{lcll}
\A \bu  & + & B^* p &= {\bf f},\\
B \bu  & & & =g. 
\end{array}
\end{equation}
By applying   the Riesz representation operators $\A^{-1} $ and $\C^{-1} $  to the first and the second  equation respectively, we obtain   the system
\begin{equation}\label{eq:SP-Hilbert}
 \begin{pmatrix} 
    I   &    \A^{-1} B^* \\
       \C^{-1} B &  0  
   \end{pmatrix}   
    \begin{pmatrix} 
  \bu  \\
     p   
   \end{pmatrix}   =
     \begin{pmatrix} 
 \A^{-1} {\bf f}  \\
     \C^{-1}g   
   \end{pmatrix}.
\end{equation}

Since $S_0$ is an invertible operator on $Q$, \eqref{eq:SP-Hilbert} is also equivalent to 
\begin{equation}\label{eq:SP-HilbertS}
 \begin{pmatrix} 
    I   &    \A^{-1} B^* \\
      S_0^{-1} \C^{-1} B &  0  
   \end{pmatrix}   
    \begin{pmatrix}
  \bu  \\
     p   
   \end{pmatrix}   =
     \begin{pmatrix} 
 \A^{-1} {\bf f}  \\
    S_0^{-1} \C^{-1}g  
   \end{pmatrix}.
\end{equation}   
The matrix operators associated with \eqref{eq:SP-Hilbert} and \eqref{eq:SP-HilbertS} will be investigated in  Section \ref{section:stability} and in Section \ref{section:approximationSPP}. 
%%%%%%%%%%%%%%%%%%%%%%%%%%%%%%%%%%%%%%%%%%%%%%%%%%
%2222
%%%%%%%%%%%%%%%%%%%%%%%%%%%%%%%%%%%%%%%%%%%%%%%%%%%
\section{Spectral and stability estimates} \label{section:stability}
In this section, we establish sharp stability estimates for the problem \eqref{abstract:variational}. 
If $T$ is a bounded  invertible operator  on a Hilbert space $X$, and $x\in X$ is the uniques solution of $Tx=y$, then 
 \begin{equation}\label{eq:Tstable} 
\|T\|^{-1} \|y\|_X \leq \|x\|_X \leq \|T^{-1}\| \|y\|_X, 
\end{equation}
where $\|\cdot \|_X$ is the Hilbert norm induced by the inner product on $X$, and $\|T\|$ is the standard norm on $\L(X,X)$ induced by $\|\cdot \|_X$. 
If in addition, we have that $T$ is symmetric, then $\sigma(T) \subset \R$ and 
 \begin{equation}\label{eq:normT} 
\|T\| = \sup \left \{|\lambda| : \lambda \in \sigma(T)\right \}, \ \text{and} \ \ 
  \|T^{-1}\| = \sup \left \{\frac{1}{|\lambda|} :  \lambda \in \sigma(T) \right \}.
\end{equation}
 %%%
 
We note that the operator  $T:\bV \times Q \to \bV \times Q $, \  $T: = \begin{pmatrix} 
    I   &    \A^{-1} B^* \\
      \C^{-1} B &  0  
   \end{pmatrix}$  
    is symmetric with respect to the inner product    
   \begin{equation} \label{eq:natiralIP}
 \left  (
   \begin{pmatrix}
  \bu  \\
     p   
   \end{pmatrix} , 
   \begin{pmatrix}
  \bv  \\
     q   
   \end{pmatrix}
 \right )_{\bV \times Q}:=a(\bu, \bv) + (p, q), 
   \end{equation}
and the operator $T_{S_0}:\bV \times Q \to \bV \times Q $, \    $T_{S_0}: = \begin{pmatrix} 
    I   &    \A^{-1} B^* \\
      S_0^{-1} \C^{-1} B &  0  
   \end{pmatrix}$
   is symmetric with respect to the $S_0$-weighted  on $Q$  inner product  
   \[
 \left  (
   \begin{pmatrix}
  \bu  \\
     p   
   \end{pmatrix} , 
   \begin{pmatrix}
  \bv  \\
     q   
   \end{pmatrix}
 \right )_{\bV \times Q_{S_0}}:=a(\bu, \bv) + (S_0p, q)=a(\bu, \bv) + (p, q)_{S_0}.  
   \]
   Thus, $\sigma(T)$ and $\sigma(T_{S_0})$ are compact subsets of $\R$. Due to the close relation of the two operators with $S_0$ we will establish estimates for  $\sigma(T)$ and find  $\sigma(T_{S_0})$.  First, we introduce the following  numerical values  % depending on the constants $m$ and $M$,
 \[
 \lambda^{\pm}_m:= \frac{1 \pm \sqrt{4m^2 +1}}{2}, \ \text{and} \ \  \lambda^{\pm}_M:= \frac{1 \pm \sqrt{4M^2 +1}}{2}.
 \]  
 %%%%%%%%
 \begin{lemma} \label{lemma:1} Assume that $\bV_0=\ker(B)$ is non-trivial.  Then, the spectrum of $ T_{S_0} $ is discrete and 
 \begin{equation}\label{eq:SpT_S} 
 \sigma(T_{S_0})=\left \{\frac{1-\sqrt{5}}{2}, 1,  \frac{1+\sqrt{5}}{2} \right \}.  
 \end{equation}
The spectrum of $T$  satisfies the following inclusion properties
 \begin{equation}\label{eq:SpT} 
 \{\lambda^{\pm}_m, 1, \lambda^{\pm}_M\} \subset  \sigma(T)\subset  [\lambda_M^{-},  \lambda_m^{-}] \cup \{ 1\} \cup [\lambda_m^{+}, \lambda_M^{+}]. 
 \end{equation}
 %and $\lambda^{\pm}_m, 1, \lambda^{\pm}_M \in  \sigma(T)$.
 \end{lemma}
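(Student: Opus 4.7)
The plan is to analyze the eigenvalue equation for $T_{S_0}$ and $T$ directly as a coupled block system, and exploit the identity $\C^{-1} B \A^{-1} B^* = S_0$ to decouple the equations.

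First I would take an eigenpair $(\bu,p)$ with $T_{S_0}\binom{\bu}{p} = \lambda \binom{\bu}{p}$, which reads
\begin{equation*}
(1-\lambda)\bu + \A^{-1} B^* p = 0, \qquad S_0^{-1} \C^{-1} B \bu - \lambda p = 0.
\end{equation*}
The case $\lambda = 1$ forces $\A^{-1} B^* p = 0$, hence $p = 0$ (since inf-sup gives $B^*$ injective), and then $B\bu = 0$, i.e., $\bu \in \bV_0$. The hypothesis that $\bV_0$ is non-trivial supplies an eigenvector, so $1 \in \sigma(T_{S_0})$. For $\lambda \neq 1$, solving the first equation as $\bu = -(1-\lambda)^{-1} \A^{-1} B^* p$ and substituting into the second, the factor $S_0^{-1}$ cancels the Schur complement identity $\C^{-1} B \A^{-1} B^* = S_0$, producing the purely scalar equation $-(1-\lambda)^{-1} p = \lambda p$. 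For $p\ne 0$ this is $\lambda^2 - \lambda - 1 = 0$, whose roots are $\tfrac{1\pm\sqrt 5}{2}$. Both roots are readily seen to be genuine eigenvalues because every $p \in Q$ yields a nontrivial pair $(\bu,p)$.

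To upgrade from point spectrum to full spectrum I would solve the resolvent equation $(T_{S_0} - \lambda I)\binom{\bu}{p} = \binom{\bbf}{g}$ explicitly for $\lambda \notin \{1,(1\pm\sqrt 5)/2\}$: the same elimination as above yields the scalar equation
\begin{equation*}
\tfrac{\lambda^2 - \lambda - 1}{1-\lambda}\, p = g - \tfrac{1}{1-\lambda}\, S_0^{-1} \C^{-1} B \bbf,
\end{equation*}
whose coefficient is nonzero, so $p$ and then $\bu$ are uniquely determined. This gives \eqref{eq:SpT_S} and shows the spectrum is the discrete three-point set.

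For $T$, the same elimination (without the $S_0^{-1}$ factor) produces instead the operator equation $S_0 p = (\lambda^2 - \lambda)\, p$. Hence, for $\lambda \neq 1$, $\lambda \in \sigma(T)$ if and only if $\lambda^2 - \lambda \in \sigma(S_0)$, and $1 \in \sigma(T)$ because $\bV_0$ is non-trivial. Using \eqref{eq:sigmaS0}, the preimage of $[m^2,M^2]$ under the map $\lambda \mapsto \lambda^2-\lambda$ is precisely $[\lambda_M^-,\lambda_m^-]\cup[\lambda_m^+,\lambda_M^+]$, and since $m^2,M^2\in \sigma(S_0)$ the four boundary values $\lambda_m^\pm,\lambda_M^\pm$ lie in $\sigma(T)$, giving \eqref{eq:SpT}. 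The main obstacle is verifying that the cancellation really produces a scalar equation for $T_{S_0}$ (where the Schur identity matches the weight) while leaving an $S_0$-dependent equation for $T$; once this algebraic reduction is set up, everything else is an explicit inversion, and the golden-ratio numerics appear as the fixed points of $\lambda\mapsto 1 - 1/\lambda$ (i.e.\ the roots of $\lambda^2-\lambda-1$).
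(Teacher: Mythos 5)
Your proposal is correct, and for everything beyond the point-spectrum computation (which is identical to the paper's) it takes a genuinely different route. The paper works through Weyl's criterion: it shows $\sigma_c(T_{S_0})=\emptyset$ by extracting a contradiction from an approximate-eigenvector sequence, proves the right inclusion in \eqref{eq:SpT} by showing any Weyl sequence for $T$ at $\lambda\neq 1$ yields a Weyl sequence for $S_0$ at $\lambda(\lambda-1)$, and proves the left inclusion by explicitly building Weyl sequences for $T$ at $\lambda_m^{\pm},\lambda_M^{\pm}$ out of Weyl sequences for $S_0$ at $m^2,M^2$. You instead invert the resolvent directly via the Schur-complement elimination; your algebra is right (the coefficient $\frac{\lambda^2-\lambda-1}{1-\lambda}$ and the reduced equation $(S_0-(\lambda^2-\lambda)I)p=\C^{-1}B\bbf-(1-\lambda)g$ both check out), and the formulas for $p$ and then $\bu$ visibly define bounded operators of the data, so $\lambda$ is in the resolvent set whenever the reduction is solvable. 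This buys you something sharper than the lemma states: the exact spectral mapping $\sigma(T)=\{1\}\cup\{\lambda: \lambda^2-\lambda\in\sigma(S_0)\}$, rather than just the two inclusions. The one step you should not leave implicit is the backward direction of your claimed equivalence for $T$, namely that $\lambda^2-\lambda\in\sigma(S_0)$ forces $\lambda\in\sigma(T)$ even when $\lambda^2-\lambda$ lies in the continuous spectrum of $S_0$ (this is exactly what you invoke to place $\lambda_m^{\pm},\lambda_M^{\pm}$ in $\sigma(T)$, since $m^2$ or $M^2$ need not be eigenvalues). Either argue the contrapositive --- if $T-\lambda I$ had a bounded inverse, the reduction would hand you a bounded inverse for $S_0-(\lambda^2-\lambda)I$ by taking data $\bbf=0$, $g=-(1-\lambda)^{-1}r$ --- or do what the paper does and convert a normalized sequence $p_n$ with $(S_0-m^2)p_n\to 0$ into an approximate eigenvector sequence $\bigl((\lambda-1)^{-1}\A^{-1}B^*p_n,\ p_n\bigr)$ for $T$, noting its norms stay bounded away from zero. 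With that sentence added, your argument is complete and, in my view, cleaner than the original.
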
  
 %%%%%%%%%

 To give a more  fluid presentation of the main results, we postpone the proof of  the Lemma \ref{lemma:1} for Section \ref{section:appendix}. %As a direct consequence of it  we have:  
 \begin{remark} In the discrete case, Lemma \ref{lemma:1}  is known, in the context of {\it block diagonal prconditioning} of  saddle point systems, at least since the work of Kuznetsov  in  \cite{kuznetsov95}, or  the works of  Silvester and  Wathen, in \cite{silvester-wathen}  and Murphy, Golub and Wathen in \cite{murphy-golub-wathen00}. Other related estimates can be found  in  in  Section 10.1.1 of the review paper of  Benzi, Golub, and Liesen  \cite{benzi-golub-liesen} and the refereces therein. 
 \end{remark}
\begin{theorem}\label{th:T1}  If  $ (\bu, p) \neq (0, 0) $  is  the solution of  \eqref{abstract:variational} then 
 \begin{equation}\label{eq:normu-pS} 
 \frac{ \left ( \|{\bf f}\|^2_{\bV^*} + \|\C^{-1} g\|^2_{S_0^{-1}} \right )^{1/2}}{\left ( |\bu|^2  + \|p\|^2_{S_0} \right )^{1/2}}
\in \left [\frac{\sqrt{5}-1}{2}, \frac{\sqrt{5}+1}{2} \right ],
 \end{equation}
and 
 \begin{equation}\label{eq:normu-p} 
\frac{ \left ( \|{\bf f}\|^2_{\bV^*} + \|g\|^2_{Q^*} \right )^{1/2}}{\left ( |\bu|^2  + \|p\|^2\right )^{1/2}}
\in [\, |\lambda_m^{-}|,|\lambda_M^{+}|\,].
 \end{equation}
\end{theorem}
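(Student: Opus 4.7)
The plan is to apply the abstract stability principle \eqref{eq:Tstable}--\eqref{eq:normT} to the two operators $T$ and $T_{S_0}$, whose spectra have already been located in Lemma \ref{lemma:1}. Each of the two systems \eqref{eq:SP-Hilbert} and \eqref{eq:SP-HilbertS} has the form ``operator applied to the unknown $(\bu,p)$ equals a known right-hand side,'' and each operator is self-adjoint with respect to the corresponding Hilbert inner product on $\bV\times Q$. So the ratio of the right-hand side norm to the solution norm is automatically confined to the interval $[1/\|\cdot^{-1}\|,\|\cdot\|]$, and \eqref{eq:normT} turns this into a spectral interval.

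For \eqref{eq:normu-pS}, the operator is $T_{S_0}$ acting on $\bV\times Q$ with the $S_0$-weighted inner product. By Lemma \ref{lemma:1}, $\sigma(T_{S_0})=\{(1-\sqrt 5)/2,\,1,\,(1+\sqrt 5)/2\}$, so $\|T_{S_0}\|=(1+\sqrt 5)/2$ and $\|T_{S_0}^{-1}\|=2/(\sqrt 5-1)=(1+\sqrt 5)/2$. It then remains to identify the two weighted norms: on the left, the definition of $(\cdot,\cdot)_{\bV\times Q_{S_0}}$ gives $|\bu|^{2}+\|p\|_{S_0}^{2}$; on the right, $|\A^{-1}\bbf|^{2}=\|\bbf\|_{\bV^*}^{2}$ by \eqref{eqn:A-1}, while
\[
\|S_0^{-1}\C^{-1}g\|_{S_0}^{2}=(S_0\,S_0^{-1}\C^{-1}g,\,S_0^{-1}\C^{-1}g)=(\C^{-1}g,\,S_0^{-1}\C^{-1}g)=\|\C^{-1}g\|_{S_0^{-1}}^{2}
\]
by definition of the $S_0^{-1}$-weighted inner product. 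Plugging these into $1/\|T_{S_0}^{-1}\|\le\|\mathrm{RHS}\|/\|(\bu,p)\|\le\|T_{S_0}\|$ yields exactly the golden-ratio bracket.

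For \eqref{eq:normu-p}, the same machinery is applied to $T$ and the natural inner product, using system \eqref{eq:SP-Hilbert}. Here the right-hand side in the $\bV\times Q$-norm is simply $\|\bbf\|_{\bV^*}^{2}+\|g\|_{Q^*}^{2}$ by \eqref{eqn:A-1}--\eqref{eqn:C-1}, and the solution norm is $|\bu|^{2}+\|p\|^{2}$. The spectral work is to read off $\|T\|=\lambda_M^{+}$ and $\|T^{-1}\|=1/|\lambda_m^{-}|$ from the inclusion $\sigma(T)\subset[\lambda_M^{-},\lambda_m^{-}]\cup\{1\}\cup[\lambda_m^{+},\lambda_M^{+}]$ together with $\lambda_M^{+},\lambda_m^{-}\in\sigma(T)$.

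The one subtle point I would flag is the identification $\|T^{-1}\|=1/|\lambda_m^{-}|$: the inclusion in Lemma \ref{lemma:1} only guarantees $\inf |\sigma(T)|=\min(|\lambda_m^{-}|,1)$, so the stated lower bound $|\lambda_m^{-}|$ is the correct norm precisely when $|\lambda_m^{-}|\le 1$, i.e.\ $m\le\sqrt 2$, which is the regime of interest (in particular covers the standard case $m\le 1$). Under that mild calibration of the inner products, the bracket $[|\lambda_m^{-}|,\lambda_M^{+}]$ is sharp, and the two-sided inequality \eqref{eq:normu-p} follows directly from \eqref{eq:Tstable}. This last verification is the only place where the argument is not just bookkeeping; everything else is the mechanical unfolding of the weighted norms on $\bV\times Q$.
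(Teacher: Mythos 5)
Your argument is correct and is essentially the paper's own proof: both reduce the theorem to \eqref{eq:Tstable}--\eqref{eq:normT} applied to $T_{S_0}$ and $T$ via the systems \eqref{eq:SP-HilbertS} and \eqref{eq:SP-Hilbert}, invoke Lemma \ref{lemma:1} for the spectra, and unwind the weighted norms of the right-hand sides exactly as you do. The caveat you flag is a genuine one that the paper's one-line proof passes over in silence: since $1\in\sigma(T)$ whenever $\ker(B)$ is nontrivial, one only gets $1/\|T^{-1}\|=\min(|\lambda_m^{-}|,1)$, so the lower endpoint $|\lambda_m^{-}|$ in \eqref{eq:normu-p} is justified precisely when $|\lambda_m^{-}|\le 1$, i.e.\ $m\le\sqrt{2}$.
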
 
 
\begin{proof}
Let   $(\bu, p) \in \bV \times Q$ be the solution of  \eqref{abstract:variational}. Then, using \eqref{eq:SP-HilbertS},   $\begin{pmatrix} \bu \\ p  \end{pmatrix} = T_{S_0} ^{-1}    \begin{pmatrix} 
 \A^{-1} {\bf f}  \\
    S_0^{-1} \C^{-1}g  
   \end{pmatrix}$.  
 The estimate \eqref{eq:normu-pS} is a direct consequence of  \eqref{eq:Tstable}, \eqref{eq:normT}, Lemma 
 \ref{lemma:1}, and the fact that 
 \[
 \left \| \begin{pmatrix} 
 \A^{-1} {\bf f}  \\
    S_0^{-1} \C^{-1}g  
   \end{pmatrix} \right \|^2_{\bV \times Q_{S_0}} =  \|{\bf f}\|^2_{\bV^*} + (S_0^{-1} \C^{-1}g, S_0^{-1} \C^{-1}g)_{S_0} =
   \|{\bf f}\|^2_{\bV^*} + \|\C^{-1} g\|^2_{S_0^{-1}}.
 \]
 For the the proof of \eqref{eq:normu-p} we use that 
 $ \begin{pmatrix} \bu \\ p  \end{pmatrix} = T^{-1}    \begin{pmatrix} 
 \A^{-1} {\bf f}  \\
 \C^{-1}g  
   \end{pmatrix}$. % where  
\end{proof}
%%%%%%%%%%%%
As a direct consequence of Theorem \ref{th:T1} we have:
 \begin{corollary}  If  $(\bu, p) \in \bV \times Q$  is the solution of  \eqref{abstract:variational} then
 \begin{equation}\label{eq:normu-pSRHS}
\left ( |\bu|^2  + \|p\|^2_{S_0} \right )^{1/2}
\leq    \frac{2}{\sqrt{5}-1}    \left ( \|{\bf f}\|^2_{\bV^*} + \frac {1} {m^2}\|\ g\|^2_{Q^*}\right )^{1/2}, 
 \end{equation}
 
  \begin{equation}\label{eq:normu-pSLHS}
\left ( |\bu|^2  + \|p\|^2_{S_0} \right )^{1/2}
\geq \frac{2}{\sqrt{5}+1} \left ( \|{\bf f}\|^2_{\bV^*} + \frac {1} {M^2}\|\ g\|^2_{Q^*} \right )^{1/2},  
 \end{equation}

 \begin{equation}\label{eq:normu-pRHS}
\left ( |\bu|^2  + \|p\|^2 \right )^{1/2}
\leq    \frac{2}{\sqrt{4m^2 +1}-1}    \left ( \|{\bf f}\|^2_{\bV^*} +\|g\|^2_{Q^*} \right )^{1/2}, 
 \end{equation}
 
  \begin{equation}\label{eq:normu-pLHS}
\left ( |\bu|^2  + \|p\|^2 \right )^{1/2}
\geq \frac{2}{\sqrt{M^2+1}+1} \left ( \|{\bf f}\|^2_{\bV^*} + \|g\|^2_{Q^*} \right )^{1/2}.
 \end{equation}
\end{corollary}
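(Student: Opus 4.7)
The plan is to read off the four inequalities as immediate rearrangements of Theorem~\ref{th:T1}, with the only extra ingredient being the translation between the $S_0^{-1}$-weighted norm on $Q$ and the dual norm $\|\cdot\|_{Q^*}$.

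For \eqref{eq:normu-pRHS} and \eqref{eq:normu-pLHS}, I would simply take reciprocals in \eqref{eq:normu-p}: since $(\bu,p)\neq 0$ exactly when the data are nonzero, the containment of the ratio in $[|\lambda_m^-|,|\lambda_M^+|]$ immediately gives
\[
\tfrac{1}{|\lambda_M^+|}\bigl(\|{\bf f}\|^2_{\bV^*}+\|g\|^2_{Q^*}\bigr)^{1/2}\leq \bigl(|\bu|^2+\|p\|^2\bigr)^{1/2}\leq \tfrac{1}{|\lambda_m^-|}\bigl(\|{\bf f}\|^2_{\bV^*}+\|g\|^2_{Q^*}\bigr)^{1/2},
\]
and then I plug in the explicit values $|\lambda_m^-|=(\sqrt{4m^2+1}-1)/2$ and $|\lambda_M^+|=(\sqrt{4M^2+1}+1)/2$ from the definitions preceding Lemma~\ref{lemma:1}.

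For the $S_0$-weighted estimates \eqref{eq:normu-pSRHS} and \eqref{eq:normu-pSLHS}, I would first reciprocate \eqref{eq:normu-pS} to obtain
\[
\tfrac{2}{\sqrt{5}+1}\,R \;\leq\; \bigl(|\bu|^2+\|p\|^2_{S_0}\bigr)^{1/2}\;\leq\;\tfrac{2}{\sqrt{5}-1}\,R,\qquad R:=\bigl(\|{\bf f}\|^2_{\bV^*}+\|\C^{-1}g\|^2_{S_0^{-1}}\bigr)^{1/2}.
\]
Then I need to replace $\|\C^{-1}g\|_{S_0^{-1}}$ by a quantity involving $\|g\|_{Q^*}$. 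By \eqref{eq:sigmaS0} the spectrum of the symmetric positive definite operator $S_0^{-1}$ is contained in $[1/M^2,1/m^2]$, so spectral calculus gives $\tfrac{1}{M^2}(\phi,\phi)\leq(S_0^{-1}\phi,\phi)\leq \tfrac{1}{m^2}(\phi,\phi)$ for every $\phi\in Q$. Applying this with $\phi=\C^{-1}g$ and invoking the Riesz isometry $\|\C^{-1}g\|=\|g\|_{Q^*}$ from \eqref{eqn:C-1} yields
\[
\tfrac{1}{M^2}\|g\|^2_{Q^*}\;\leq\;\|\C^{-1}g\|^2_{S_0^{-1}}\;\leq\;\tfrac{1}{m^2}\|g\|^2_{Q^*}.
\]
Substituting the upper bound into the upper bound for $(|\bu|^2+\|p\|^2_{S_0})^{1/2}$ gives \eqref{eq:normu-pSRHS}, and substituting the lower bound into the lower bound gives \eqref{eq:normu-pSLHS}.

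There is no real obstacle here; the corollary is essentially a bookkeeping exercise. The only point requiring minor attention is that the bounds in the two passes go in opposite directions, so the upper $S_0^{-1}$-spectral bound ($1/m^2$) must be paired with the upper bound on $(|\bu|^2+\|p\|^2_{S_0})^{1/2}$, while the lower one ($1/M^2$) must be paired with the lower bound—conflating the two would produce the wrong constants.
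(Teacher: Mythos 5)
Your derivation is correct and is exactly the argument the paper intends: the corollary is stated there without proof as a ``direct consequence'' of Theorem~\ref{th:T1}, and your two steps (reciprocating the ratio bounds, then converting $\|\C^{-1}g\|_{S_0^{-1}}$ to $\|g\|_{Q^*}$ via $\sigma(S_0^{-1})\subset[1/M^2,1/m^2]$ and the Riesz isometry) supply precisely the omitted bookkeeping, with the directions of the spectral bounds correctly paired. The one discrepancy is in \eqref{eq:normu-pLHS}: your computation yields the constant $2/(\sqrt{4M^2+1}+1)$, whereas the paper prints $2/(\sqrt{M^2+1}+1)$, which is strictly larger and does not follow from $|\lambda_M^+|=(\sqrt{4M^2+1}+1)/2$; this is evidently a typo in the paper (compare the parallel constant $\sqrt{4m^2+1}$ in \eqref{eq:normu-pRHS}), and your version is the correct one.
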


%%%%%%%%%%%%%%%%%%%%%%%%%%%%%%%%%%%%%%%%%%%%%%%%%%
%333
%%%%%%%%%%%%%%%%%%%%%%%%%%%%%%%%%%%%%%%%%%%%%%%%%%%

\section{The Xu-Zikatanov approach for the symmetric case} \label{section:approximation} 
In this section we present the  Xu and Zikatanov  estimate  for the  Galerkin approximation of variational problems in 
the {\it  symmetric} case and provide a spectral description  of  the estimating constants. 

Let $X$ be a Hilbert space, let $\B: X \times X \to \R$ be  a {\it symmetric} bilinear form satisying
 \begin{equation} \label{inf-sup_B}
M_{\B}:= \du{\sup}{x \in X}{} \ \du {\sup} {y  \in X}{} \ \frac {\B(x, y)}{\|x\|_X \ 
\|y\|_X}<\infty  \ \ \text{and} \  \  m_{\B}:=\du{\inf}{x \in X}{} \ \du {\sup} {y  \in X}{} \ \frac {\B(x, y)}{\|x\|_X \ 
\|y\|_X}>0. 
\end{equation} 
%and \begin{equation}\label{sup-sup_B}
%\du{\sup}{x \in X}{} \ \du {\sup} {y  \in X}{} \ \frac {\B(x, y)}{\|x\|_X \ 
%\|y\|_X} =M_{\B} <\infty.
%\end{equation}
For any $F \in X^*$, we consider the problem: \\
Find $x \in X$ such that
\begin{equation}\label{eq:Bproblem}
\B(x,y) =\<F,y\>, \Forall \ y \in X.
\end{equation}

Let $T:X\to X$ be the {\it symmetric} operator associated to the form $\B(\cdot, \cdot)$,
\begin{equation}\label{eq:Tdef}
\< Tx, z\>_{X}=\B(x, z), \Forall \ x, z \in X.
\end{equation}

Next result gives a characterization for the  invertibility of a symmetric bounded  operator $T:X \to X $ with   $X$ a Hilbert space   
and is a direct consequence of  the {\it bounded inverse theorem}, see e.g. Theorem 3.8 in \cite{schechter}, and of the fact that for an {\it injective  and symmetric}  bounded  operator \\ $T:X \to X $, we have $range(T^t) =range(T) = X$. 

\begin{prop} \label{prop:p2} 
A  bounded symmetric  operator $T$   on a Hilbert space $X$  is invertible if and only if 
there exists $\delta >0$ such that $\|Tx \| \geq \delta \|x\|$ for all  $x \in X$. 
 \end{prop}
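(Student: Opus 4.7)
The plan is to prove both directions separately, using the bounded inverse theorem for one direction and a closed-range-plus-dense-range argument for the other.

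For the forward implication, assume $T$ is invertible. Since $T$ is bounded and bijective, the bounded inverse theorem gives that $T^{-1}$ is bounded. For any $x \in X$, setting $y = Tx$ yields $\|x\| = \|T^{-1} y\| \leq \|T^{-1}\| \, \|Tx\|$, so the estimate holds with $\delta = \|T^{-1}\|^{-1}$.

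For the reverse implication, assume that $\|Tx\| \geq \delta \|x\|$ for all $x \in X$. First, this estimate immediately gives injectivity: $Tx = 0$ forces $x = 0$. Next I would argue that $\mathrm{range}(T)$ is closed. Given a sequence $\{Tx_n\}$ converging to some $y \in X$, the estimate $\delta \|x_n - x_m\| \leq \|Tx_n - Tx_m\|$ shows that $\{x_n\}$ is Cauchy, hence convergent to some $x \in X$. By continuity of $T$, $Tx_n \to Tx$, and so $y = Tx \in \mathrm{range}(T)$. Now I would use symmetry to conclude that the range is dense: since $T = T^t$ and $T$ is injective,
\[
\overline{\mathrm{range}(T)} \;=\; \ker(T^t)^{\perp} \;=\; \ker(T)^{\perp} \;=\; \{0\}^{\perp} \;=\; X.
\]
Combining closedness with density yields $\mathrm{range}(T) = X$, so $T$ is bijective. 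The bounded inverse theorem then provides boundedness of $T^{-1}$, completing the proof.

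The argument is essentially routine functional analysis; the only mildly delicate point is the invocation of symmetry to pass from injectivity to dense range via the orthogonal decomposition $X = \overline{\mathrm{range}(T)} \oplus \ker(T^t)$. This is precisely the ingredient that the excerpt isolates just before the proposition statement, so there is no real obstacle, only a clean assembly of the stated ingredients.
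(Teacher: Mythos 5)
Your proof is correct and follows exactly the route the paper indicates: the bounded inverse theorem for the forward direction, and for the converse the combination of the lower bound (giving injectivity and closed range) with symmetry (giving dense range via $\overline{\mathrm{range}(T)}=\ker(T^t)^{\perp}=\ker(T)^{\perp}=X$). The paper only sketches this argument in the sentence preceding the proposition, and your write-up is a faithful, correctly detailed expansion of that same sketch.
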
 
%The result holds for any normal operator, and a  detailed  proof  can be found as part of  Theorem 12.12 in  \cite{rudin}.  
%\begin{proof}

%We apply next  Proposition \ref{prop:p2} to study the operator $T$ defined by \eqref{eq:Tdef}. 
From  the first part of \eqref{inf-sup_B}, we obtain that  $T$ is  bounded and $\|\B\|:=\|T\|=M_{\B}$. 
The second part of assumption \eqref{inf-sup_B} implies that $\|T x\|_X \geq m_{\B} \|x\|_X $, for all  $x \in X$. Since $T$ is a symmetric operator, by Proposition \ref{prop:p2}, we get that  $T$ is invertible. Consequently, the problem  \eqref{eq:Bproblem}  has unique solution for 
 any $F \in X^*$. We note  that  the second part of   \eqref{inf-sup_B}  is equivalent to 
 $
 \|T^{-1}\| = \frac{1}{m_{\B}}. 
 $
Thus,  from \eqref{eq:Tstable}, using the Riesz representation theorem, we get that the solution of  \eqref{eq:Bproblem} satisfies
\begin{equation}\label{eq:Tstable2} 
1/ M_\B  \ \|F\|_{X^*} \leq \|x\|_X \leq  1/ {m_{\B}}\  \|F\|_{X^*}. 
\end{equation}
Next, we let $X_h\subset X$, be a finite dimensional approximation space, and consider the following discrete variational problem: \\
Find $x_h \in X_h$ such that
\begin{equation}\label{eq:Bproblem_h}
\B(x_h,y) =\<F,y\>, \Forall \ y \in X_h.
\end{equation}
Let $T_h:X_h\to X_h$ be the {\it symmetric} operator defined by 
\[
\< T_h x_h, z_h\>_{X}=\B(x_h, z_h), \Forall \ x_h, z_h \in X_h.
\]

Assuming the  discrete $\inf-\sup$ condition 
 \begin{equation} \label{inf-sup_B-h}
\du{\inf}{x_h \in X_h}{} \ \du {\sup} {y_h  \in X_h}{} \ \frac {\B(x_h, y_h)}{\|x_h\|_X \ 
\|y_h\|_X} :=m_{\B_h}>0,
\end{equation} 
the discrete problem \eqref{eq:Bproblem_h} has unique solution  $x_h$, called the Galerkin approximation of the continuous solution $x$ of  \eqref{eq:Bproblem}. The following result improves on  the estimate of Aziz and Babu\v{s}ka in \cite{A-B}, and was proved in a more general case by Xu and Zikatanov   in \cite{xu-zikatanov-BBtheory}. 
%%%%%%%%%%%%%%%, not necessary symmetric,
\begin{theorem}\label{th:Xu-Zikatanov}
Let $x$ the solution of \eqref{eq:Bproblem} and let  $x_h$ be  the solution of  \eqref{eq:Bproblem_h}. Under the assumptions 
\eqref{inf-sup_B} and \eqref{inf-sup_B-h}, the following error estimate holds
 \begin{equation} \label{eq:Error-h}
\| x -x_h\|_X \leq \frac{M_\B}{m_{\B_h}}\  \du{\inf}{y_h \in X_h}{} \ \|x-y_h\|_X.  
 \end{equation} 
\end{theorem}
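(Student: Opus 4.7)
The plan is to follow the standard Xu--Zikatanov strategy: introduce the Galerkin projection, express the error as an image of this projection, bound its operator norm using the two inf-sup conditions, and then invoke the Hilbert-space identity $\|I-P\|=\|P\|$ for nontrivial projections (Kato's lemma) to gain the missing factor. The symmetry of $\B$ plays no essential role in what follows; it only enters through the ambient framework of the paper.

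First I would define the discrete Galerkin projection $P_h : X \to X_h$ by requiring, for each $x \in X$,
\[
\B(P_h x, y_h) = \B(x, y_h) \qquad \text{for all } y_h \in X_h.
\]
The discrete problem on $X_h$ is governed by $T_h$, which by \eqref{inf-sup_B-h} and Proposition~\ref{prop:p2} (applied on the finite-dimensional Hilbert space $X_h$) is invertible, so $P_h$ is well defined and linear. Since $P_h y_h = y_h$ for every $y_h \in X_h$, the operator $P_h$ is idempotent, and in particular $x_h = P_h x$, where $x_h$ and $x$ solve \eqref{eq:Bproblem_h} and \eqref{eq:Bproblem} respectively.

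Next I would bound $\|P_h\|$. Writing $x_h = P_h x$ and using \eqref{inf-sup_B-h} together with the continuity constant $M_\B$ from \eqref{inf-sup_B},
\[
m_{\B_h}\, \|P_h x\|_X \;\le\; \sup_{y_h \in X_h}\frac{\B(P_h x, y_h)}{\|y_h\|_X}
\;=\;\sup_{y_h \in X_h}\frac{\B(x, y_h)}{\|y_h\|_X}
\;\le\; M_\B\, \|x\|_X,
\]
so $\|P_h\| \le M_\B / m_{\B_h}$. Since $P_h$ is idempotent with $P_h y_h = y_h$ on $X_h$, for any $y_h \in X_h$,
\[
x - x_h \;=\; (I - P_h)x \;=\; (I - P_h)(x - y_h),
\]
which yields the Céa-type bound $\|x-x_h\|_X \le \|I - P_h\|\,\|x-y_h\|_X$.

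The main (and only) nontrivial point is to sharpen $\|I - P_h\|$ to $\|P_h\|$. For this I would invoke Kato's identity: any bounded linear projection $P$ on a Hilbert space with $P \ne 0, I$ satisfies $\|P\| = \|I - P\|$. The two trivial cases are harmless: if $P_h = 0$ then $x_h = 0$ and the estimate reduces to $\|x\|_X \le (M_\B/m_{\B_h})\|x\|_X$, which follows from $M_\B \ge m_{\B_h}$; if $P_h = I$ then $x = x_h$ and both sides vanish. In the remaining case, combining Kato's identity with the bound on $\|P_h\|$ gives
\[
\|x - x_h\|_X \;\le\; \|I - P_h\|\,\|x - y_h\|_X \;=\; \|P_h\|\,\|x - y_h\|_X \;\le\; \frac{M_\B}{m_{\B_h}}\,\|x - y_h\|_X,
\]
and taking the infimum over $y_h \in X_h$ yields \eqref{eq:Error-h}. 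The hard part is thus Kato's equality; I would either cite it directly or sketch the short Hilbert-space argument based on $\|Px\|^2 = \|(I-P)x\|^2$ after decomposing $x$ along $\mathrm{range}(P)$ and its orthogonal complement.
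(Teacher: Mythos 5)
Your proof is correct and follows exactly the route the paper indicates: it introduces the Galerkin projection $\Pi x = x_h$, bounds its norm by $M_\B/m_{\B_h}$ via the continuity and discrete inf-sup conditions, and sharpens the C\'ea-type bound using Kato's identity $\|\Pi\| = \|I-\Pi\|$ for nontrivial projections. The paper only sketches this argument with citations to Kato and Xu--Zikatanov, so your write-up is a faithful (and more detailed) version of the same proof.
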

%%%%%%%%%%%%
The estimate is based on the fact that the operator $\Pi: X \to X_h$ defined by $\Pi x= x_h$, where
\[
\B(x_h, z_h)=\B(x, z_h) , \Forall  z_h \in X_h,
\]
is a projection, and $\|\Pi\|_{\L(X,X)} = \|I- \Pi\|_{\L(X,X)}$, see \cite{kato, xu-zikatanov-BBtheory}.  Since our operators $T$ and $T_h$ are symmetric,  we can characterize the constants $M_\B$ and $m_{\B_h} $ using \eqref{eq:normT}. Thus, we have 
 \[
M_\B = \|T\|\  =\max  \{|\lambda| : \lambda \in \sigma(T)\}, \ \ \text{and} 
\]
 \[ m_{\B_h}  =  {1}/{ \|T_h^{-1}\|}  =
\min  \{|\lambda| : \lambda \in \sigma(T_h)\}. 
\] 
%%%%%%%%%%%%%%%%%%%%%%
%444
%%%%%%%%%%%%%%%%%%%%%%

 \section{Approximation estimates for the coercive symmetric case} \label{section:approximationSPP} 
 In this section, we apply the approximation result of Theorem \ref{th:Xu-Zikatanov} to the  discrete approximation   of problem \eqref{abstract:variational}.  In what follows, we work with  the setting and   the notation introduced  in Section \ref{section: notation}.  
  Let $\bV_h$ be a subset of $ \bV$ and let $ {\M}_h $ be a finite dimensional  subspace  of $ Q$. 
 We consider the  restrictions of the forms   $a(\cdot, \cdot)$ and $b(\cdot, \cdot)$ to the   discrete spaces  $\bV_h$ and $M_h$ and define the corresponding discrete  operators $A_{h}, \C_{h}, B_h$, and $B_h^*$.  For example, $A_{h} $ is the discrete version of $A$, and    is defined by   
 \[
\<A_{h}  \bu_h , \bv_h\>=a( \bu_h , \bv_h), \Forall \bu_h \in \bV_h, \bv_h \in \bV_h. 
\] 
We assume that  there exists  $m_h>0$ such that 
 \begin{equation} \label{inf-sup_h}
\du{\inf}{p_h \in M_h}{} \ \du {\sup} {\bv_h \in \bV_h}{} \ \frac {b(\bv_h, p_h)}{\|p_h\|\ 
|\bv_h|} =m_h>0.
\end{equation} 

We define the discrete Schur complement   $S_{0,h}: \M_h \to \M_h$ by $S_{0,h}:=\C^{-1}_{h} B_h A_h^{-1} B_h^*$, and notice that, %as in the continuous case, 
$m_h^2$ is  the smallest  eigenvalues of  $\sigma( S_{0,h})$.

We consider the discrete variational form of \eqref{abstract:variational}: \\  Find $(\bu_h, p_h)  \in \bV_h \times \M_h$ such that
\begin{equation}\label{abstract:variational-h}
\begin{array}{lcll}
a(\bu_h,\bv_h) & + & b( \bv_h, p_h) &= \langle {\bf f},\bv_h \rangle \ \Forall  \bv_h \in \bV_h,\\
b(\bu_h, q_h) & & & =\<g,q_h\>  \  \Forall  q_h \in \M_h, 
\end{array}
\end{equation}
 It is  well known, see e.g.,   \cite{braess, brenner-scott, FJS-inf-sup, xu-zikatanov-BBtheory, boffi-brezzi-demkowicz-duran-falk-fortin2006}, that under the assumption \eqref{inf-sup_h}, the problem  \eqref{abstract:variational-h} has unique solution $(\bu_h, p_h)\in \bV_h \times \M_h$. 
 %%%%
 
 \begin{theorem}\label{th:T2}  If  $(\bu, p) \in \bV \times Q$  is the solution of  \eqref{abstract:variational}  and $(\bu_h, p_h)\in \bV_h \times \M_h$  is the solution of  \eqref{abstract:variational-h},  then the following inequality holds

 \begin{equation}\label{eq:normu-p-h} 
 |\bu -\bu_h|^2  + \|p-p_h\|^2
\leq   C_h^2    \left ( \du {\inf} {\bv_h \in \bV_h}{} |\bu -\bv_h|^2 + \du{\inf}{q_h \in M_h}{}\|p-q_h\|^2  \right ), 
 \end{equation}
 where 
 \[
 C_h=\frac{\sqrt{4 M^2 +1} +1}{{\sqrt{4m_h^2 +1}-1}}. 
 \]
 \end{theorem}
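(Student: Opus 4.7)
The strategy is to recast the saddle-point problem as a single symmetric variational problem on the product space $X := \bV \times Q$, then invoke the Xu--Zikatanov estimate (Theorem \ref{th:Xu-Zikatanov}) together with the spectral characterization of the constants discussed at the end of Section \ref{section:approximation} and the eigenvalue localization of Lemma \ref{lemma:1}.

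First I would equip $X = \bV \times Q$ with the natural inner product \eqref{eq:natiralIP} and introduce the symmetric bilinear form
\[
\B\bigl((\bu,p),(\bv,q)\bigr) := a(\bu,\bv) + b(\bv,p) + b(\bu,q),
\]
together with the data functional $\langle F,(\bv,q)\rangle := \langle \bbf,\bv\rangle + \langle g,q\rangle$. A short computation using \eqref{eqn:A-1}--\eqref{eqn:C-1} identifies the $X$--Riesz operator associated with $\B$ as exactly the operator $T$ of Section \ref{section:stability}, and shows that \eqref{abstract:variational} is equivalent to the single equation $\B((\bu,p),(\bv,q)) = \langle F,(\bv,q)\rangle$ for all $(\bv,q)\in X$. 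The discrete problem \eqref{abstract:variational-h} is then precisely the Galerkin discretization of this equation on $X_h := \bV_h \times \M_h$, whose associated symmetric operator I denote by $T_h$.

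Next, I would translate the continuity and inf-sup constants of $\B$ into spectral quantities, using the symmetry-based formulas from the end of Section \ref{section:approximation}:
\[
M_\B = \|T\| = \max\{|\lambda| : \lambda \in \sigma(T)\}, \qquad m_{\B_h} = 1/\|T_h^{-1}\| = \min\{|\lambda| : \lambda \in \sigma(T_h)\}.
\]
Applying Lemma \ref{lemma:1} to $T$ gives $M_\B \leq \lambda_M^{+} = (1+\sqrt{4M^2+1})/2$. The same lemma, applied at the discrete level (its structural hypotheses are preserved: $a$ remains symmetric, bounded, and coercive when restricted to $\bV_h$, and $b$ satisfies the discrete inf-sup \eqref{inf-sup_h} with constant $m_h$), yields a spectral inclusion for $T_h$ whose element of smallest modulus is $\lambda_{m_h}^{-}$; hence $m_{\B_h} \geq |\lambda_{m_h}^{-}| = (\sqrt{4m_h^2+1}-1)/2$. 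Dividing the two bounds gives exactly $M_\B/m_{\B_h} \leq C_h$.

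Finally, Theorem \ref{th:Xu-Zikatanov} delivers
\[
\|(\bu,p)-(\bu_h,p_h)\|_X \;\leq\; \frac{M_\B}{m_{\B_h}}\,\inf_{(\bv_h,q_h)\in X_h}\|(\bu,p)-(\bv_h,q_h)\|_X,
\]
and squaring, together with $\|(\bw,r)\|_X^2 = |\bw|^2 + \|r\|^2$ and the obvious splitting
\[
\inf_{(\bv_h,q_h)}\bigl(|\bu-\bv_h|^2+\|p-q_h\|^2\bigr) = \inf_{\bv_h}|\bu-\bv_h|^2 + \inf_{q_h}\|p-q_h\|^2,
\]
produces \eqref{eq:normu-p-h} with the advertised constant. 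The only delicate point is the transfer of Lemma \ref{lemma:1} to $T_h$; since the lemma depends only on structural properties of $a$ and $b$ that are inherited by the discrete spaces, I expect this to be a purely notational step rather than a substantive obstacle.
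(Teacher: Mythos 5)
Your proposal follows essentially the same route as the paper's own proof: recasting \eqref{abstract:variational} as a single symmetric problem for the form $\B$ on $\bV\times Q$ with the natural inner product \eqref{eq:natiralIP}, identifying the induced operator with $T$ (and its discrete analogue with $T_h$), reading off $M_\B$ and $m_{\B_h}$ from the spectral inclusion \eqref{eq:SpT} of Lemma \ref{lemma:1} applied at both levels, and concluding via Theorem \ref{th:Xu-Zikatanov}. The only cosmetic difference is that you use one-sided bounds $M_\B \leq \lambda_M^{+}$ and $m_{\B_h} \geq |\lambda_{m_h}^{-}|$ where the paper asserts equalities; this is harmless (indeed slightly more economical, since it needs only the right-hand inclusion in \eqref{eq:SpT}).
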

 %%%
 \begin{proof}
We consider   the form $\B(\cdot, \cdot)$  defined on  $X=\bV \times Q$  by  
\[
\B((\bu, p), (\bv, q)) := a(\bu, \bv) + b(\bv,p) + b(\bu,q). 
\] 
It is easy to check that  \eqref{abstract:variational} 
is equivalent to the following problem: \\
Find  $(\bu, p) \in \bV \times Q$  such that 
\[
\B((\bu, p), (\bv, q)) = \< F, (\bv, q)\> :=\langle {\bf f},\bv \rangle + \<g,q\>  \  \ \Forall  (\bv, q)  \in \bV \times Q. 
\]
With the  natural inner product defined by \eqref{eq:natiralIP},  the operator $T$  induced by the form $\B$ is exactly 
\[
T: = \begin{pmatrix} 
    I   &    \A^{-1} B^* \\
      \C^{-1} B &  0  
   \end{pmatrix},
\]  
and the corresponding discrete operator  is 
\[
 %  \begin{equation}\label{eq:Th}
   T_h= \begin{pmatrix} 
    I   &    \A_h^{-1} B_h^* \\
      \C_h^{-1} B_h &  0  
   \end{pmatrix}. 
 %  \end{equation}
\]
  
Now, we apply Theorem \ref{th:Xu-Zikatanov} for  the form $\B(\cdot, \cdot)$  defined on  $X=\bV \times Q$. 
 Using the description of $M_\B$ and $m_{\B_h}$ at the end of Section \ref{section:approximation} and the spectral estimate  \eqref{eq:SpT} of Lemma \ref{lemma:1} for both $T$ and $T_h$, we obtain 
 \[
M_\B = \|T\|\  =\max  \{|\lambda| : \lambda \in \sigma(T)\}=  \frac{1}{2} \left (\sqrt{4 M^2 +1} +1\right ),
\]
 and
  \[
m_{\B_h}  = {1}/ { \|T_h^{-1}\|}  =\min  \{|\lambda| : \lambda \in \sigma(T_h)\}= \frac{1}{2}\left (\sqrt{4m_h^2 +1}-1\right ). 
 \]
 The estimate \eqref{eq:normu-p-h} follows as   a direct consequence of  \eqref{eq:Error-h}.
  \end{proof}
 %%%%%%%%%%%%%%%%
 \begin{remark}
 If $m_h \to 0$, then we have that $C_h = O(m_h^{-2})$. The order can be improved if  we equip  $X=\bV \times Q$ or  $X_h=\bV_h \times \M_h$ with the  {\it weighted} on $Q$ inner product   defined by,    
   \[
 \left  (
   \begin{pmatrix}
  \bu  \\
     p   
   \end{pmatrix} , 
   \begin{pmatrix}
  \bv  \\
     q   
   \end{pmatrix}
 \right )_{{m_h}}:=a(\bu, \bv) + m_h^2\,  (p, q).  
   \]
The  operators ${T}$ and ${T}_h$ corresponding to the same form $\B$ and the new weighted inner product  are 
\[
 {T} = \begin{pmatrix} 
    I   &    \A^{-1} B^* \\
     m_h^{-2} \C^{-1} B &  0  
  \end{pmatrix}, \ \ \text{and} \ \ \
   { T_h}= \begin{pmatrix} 
   I   &    \A_h^{-1} B_h^* \\
  m_h^{-2}   \C_h^{-1} B_h &  0  
   \end{pmatrix}. 
\]
Using the same arguments as in the proof of Theorem \ref{th:T2}, we get 
 \begin{equation}\label{eq:normu-p-h2} 
|\bu -\bu_h|^2  + m_h^2 \|p-p_h\|^2
\leq    D_h^2 ( \du {\inf} {\bv_h \in \bV_h}{} |\bu -\bv_h|^2 + m_h^2  \du{\inf}{q_h \in M_h}{}\|p-q_h\|^2),  
 \end{equation}
where 
\[
  D_h={\|T\|} \, \|T_h^{-1}\|= \frac{\sqrt { 4 \frac{M^2} {m_h^2}+1} +1}{2}\,  \frac{2}{\sqrt{5} -1}= O(m_h^{-1}). 
\]
Then, from \eqref{eq:normu-p-h2}, we obtain 
\begin{equation}\label{eq:est1}
|\bu -\bu_h|  \lesssim  \frac{1}{m_h} \   \du {\inf} {\bv_h \in \bV_h}{} |\bu -\bv_h| +  \du{\inf}{q_h \in M_h}{}\|p-q_h\|,
\end{equation}
and 
\begin{equation}\label{eq:est2}
\|p-p_h\|   \lesssim \frac{1}{m_h^2} \   \du {\inf} {\bv_h \in \bV_h}{} |\bu -\bv_h| +   \frac{1}{m_h} \  \du{\inf}{q_h \in M_h}{}\|p-q_h\|. 
\end{equation}
 \end{remark}
 %%%%%%%%%%%%
By  $A(h) \lesssim  B(h)$,  we understand that  $A(h) \leq c\,  B(h)$, for a constant $c$ independent of $h$. 
The estimates \eqref{eq:est1} and \eqref{eq:est2}  can be useful when the solution $(\bu, p)$ exhibits extra regularity, \cite{B08, BB1, BBX1, BBX2}, and the discretization is done on pairs $(\bV_h, \M_h)$ that are not necessarily  stable, but the $\inf-\sup$ constant $m_h$ can be theoretically or numerically estimated. 
% \precapprox \preceq  \precsim

%%%%%%%%%%%%%%%%%%%%%%%%%%%%%%%%%%%%%%%%%%%%%%%%%%
%555
%%%%%%%%%%%%%%%%%%%%%%%%%%%%%%%%%%%%%%%%%%%%%%%%%%%

 \section{The proof of Lemma \ref{lemma:1}} \label{section:appendix} 
 
We start this section by reviewing a known functional analysis result that describes the spectrum of a bounded symmetric operator on a Hilbert space. According to one of the referees,  ``the result has actually been known for over a century, and is known as {\it Weyl's criterion}''. It can be found in  \cite{reed-simon}, Theorem VII.12, p.237, and it also follows from Proposition  \ref{prop:p2}.

\begin{prop}\label{prop:3} The spectrum $\sigma(T)$  of a  bounded  symmetric operator $T$ on a Hilbert space $X$    satisfies 
 \[
 \sigma(T) = \sigma_p (T) \cup \sigma_c(T), 
 \] 
where $\sigma_p(T)$ is the {\it point spectrum} of $T$ and consists of all eigenvalues of $T$, and $\sigma_c(T)$  is the {\it continuous spectrum} of $T$ and consists of all $\lambda \in  \sigma(T)$ such that $T-\lambda I $ is an 
one-to-one mapping of  $X$ onto a dense proper subspace of $X$. 
Consequently, (using Proposition  \ref{prop:p2}) for any $T$ is symmetric, we have  that $\lambda \in \sigma(T)$ if and only if  there exists a sequence  $\{x_n\} \subset X$, such that 
\[
\|x_n\| =1, \Forall n   , \ \text{and} \  \  \|(T -\lambda) x_n \| \to 0\  \text {as} \ n\to \infty.  
\]
\end{prop}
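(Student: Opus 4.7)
The plan is to deduce Proposition~\ref{prop:3} from Proposition~\ref{prop:p2} together with the Hilbert-transpose identity $(T-\lambda I)^{t}=T-\bar\lambda I$, valid because $T$ is symmetric. I will carry out three short steps: first confirm $\sigma(T)\subset\R$, then rule out a residual spectrum to obtain $\sigma(T)=\sigma_{p}(T)\cup\sigma_{c}(T)$, and finally read off the Weyl-sequence characterization.

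For the reality of the spectrum, fix $\lambda=a+ib$ with $b\neq 0$. Using symmetry of $T-aI$, a direct expansion gives
\[
\|(T-\lambda I)x\|^{2}=\|(T-aI)x\|^{2}+b^{2}\|x\|^{2}\ge b^{2}\|x\|^{2},
\]
so $T-\lambda I$ is bounded below; the same inequality, with $\bar\lambda$ in place of $\lambda$, shows that $(T-\lambda I)^{t}=T-\bar\lambda I$ is injective. Hence $\overline{\mathrm{range}(T-\lambda I)}=\ker((T-\lambda I)^{t})^{\perp}=X$, and the bounded-below estimate upgrades dense range to $\mathrm{range}(T-\lambda I)=X$. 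Thus $T-\lambda I$ is invertible and $\lambda\notin\sigma(T)$, so $\sigma(T)\subset\R$.

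Next, fix $\lambda\in\sigma(T)\setminus\sigma_{p}(T)$. Then $\lambda$ is real and $T-\lambda I$ is symmetric and injective. Applying the same orthogonality identity, $\overline{\mathrm{range}(T-\lambda I)}^{\perp}=\ker((T-\lambda I)^{t})=\ker(T-\lambda I)=\{0\}$, so $\mathrm{range}(T-\lambda I)$ is dense in $X$. It is a proper subspace because $\lambda\in\sigma(T)$; therefore $\lambda\in\sigma_{c}(T)$, establishing the decomposition. The Weyl characterization is then a direct restatement of Proposition~\ref{prop:p2} applied to the symmetric operator $T-\lambda I$: invertibility is equivalent to being bounded below, so $\lambda\in\sigma(T)$ iff $T-\lambda I$ fails to be bounded below, i.e., iff unit vectors $x_{n}$ exist with $\|(T-\lambda I)x_{n}\|\to 0$; conversely, the existence of such a sequence prevents a lower bound and hence prevents invertibility.

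The main obstacle is the first step --- reality of the spectrum --- because Proposition~\ref{prop:p2} requires symmetry while $T-\lambda I$ is not symmetric for nonreal $\lambda$. The elementary norm identity above, together with the transpose observation, produces both a lower bound and dense range for $T-\lambda I$, from which invertibility follows without symmetry. Everything else is a two-line consequence of Proposition~\ref{prop:p2} and the orthogonality formula $\overline{\mathrm{range}(S)}^{\perp}=\ker(S^{t})$.
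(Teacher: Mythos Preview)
Your argument is correct and is precisely the proof the paper alludes to: the paper does not write out a proof at all, merely citing Reed--Simon and remarking that the statement ``also follows from Proposition~\ref{prop:p2}.'' You have supplied the details behind that remark --- reality of the spectrum via the elementary lower bound, emptiness of the residual spectrum via $\overline{\mathrm{range}(T-\lambda I)}^{\perp}=\ker(T-\lambda I)$, and the Weyl-sequence equivalence as a restatement of Proposition~\ref{prop:p2} for the symmetric operator $T-\lambda I$.
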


Next,  we are ready to present a  proof for Lemma \ref{lemma:1}.

\begin{proof}(Lemma  \ref{lemma:1}) 
First, we will  justify \eqref{eq:SpT_S}. 
Let  $\lambda \in \sigma_p(T_{S_0}) $  be an eigenvalue  and let 
$ \begin{pmatrix}  \bu  \\ p \\ \end{pmatrix} \in \bV \times Q  $  be a corresponding eigenvector. Then,  
\[
T_{S_0}  \begin{pmatrix}  \bu \\ p \\ \end{pmatrix} = \lambda  \begin{pmatrix}  \bu  \\ p \\ \end{pmatrix},
\]
which leads to 
\begin{equation}\label{eq:sis1}
\begin{array}{lccl}
\A^{-1} B^* p  &= & (\lambda-1)   \bu,\\
S_0^{-1} C^{-1} B\bu   & = & \lambda p. 
\end{array}
\end{equation} 
From \eqref{eq:sis1} it is easy to see that for any non-zero  $\bu_0 \in \bV_0=\ker(B)$ we have that 
$  \begin{pmatrix}  \bu_0  \\ 0 \\ \end{pmatrix} $ is an eigenvector for $T_{S_0} $ corresponding to $\lambda =1$. 
Thus $1 \in \sigma_p(T_{S_0})$. If $\lambda \neq 1$, we can assume $p\neq 0$, and by substituting  $\bu$ from  the first equation of  \eqref{eq:sis1}  into the second equation  of  \eqref{eq:sis1}, we get 
\[
S_0^{-1}  S_0 p = \lambda(\lambda -1) p,  \ \text{or} \  \lambda(\lambda -1) =1,
\]
which gives $\lambda =\lambda_{\pm} : = \frac{1 \pm \sqrt{5}}{2}$. Note that for any $p\neq 0$ we have that 
$  \begin{pmatrix}  \lambda_{\pm} \, \A^{-1} B^* p  \\ p \\ \end{pmatrix} $ is an eigenvector for $T_{S_0} $ corresponding to $\lambda =\lambda_{\pm}$. Thus, 
\[
\sigma_p(T_{S_0})=\left \{\frac{1-\sqrt{5}}{2}, 1,  \frac{1+\sqrt{5}}{2} \right \}.
\]
Next, we prove that the continuous spectrum of  $T_{S_0}$ is empty. If we let $\lambda \in  \sigma_c(T_{S_0})$, then $\lambda \neq 1, \lambda \neq  \frac{1\pm \sqrt{5}}{2}$ and, according to Proposition \ref{prop:3},  
 there exists a sequence   
$ \begin{pmatrix}  \bu_n  \\ p_n \\ \end{pmatrix}  \in \bV \times Q $,  such that 

\[
\left \|  \begin{pmatrix}  \bu_n  \\ p_n \\ \end{pmatrix} \right \| =1\ \ \text{ and } \ \   \left \| T_{S_0}  \begin{pmatrix}  \bu_n  \\ p_n \\ \end{pmatrix} -  \lambda  \begin{pmatrix}  \bu_n  \\ p_n \\ \end{pmatrix} \right \|  \to 0, \ \ \text{as} \ \ n\to \infty. 
\]
This leads to 
\begin{equation}\label{eq:sis1n}
\begin{array}{ll}
 \A^{-1} B^* p_n   +( 1 -\lambda) \bu_n & \to 0,\\
 S_0^{-1} \C^{-1} B \bu_n     -\lambda  p_n & \to 0.  
\end{array}
\end{equation}
From \eqref{eq:sis1n}, using that $\C^{-1} B$ and $S_0^{-1}$  are  continuous operators,  we get 
\begin{equation}\label{eq:sis2n}
\begin{array}{lccl}
 (1 -\lambda) \C^{-1} B \bu_n &  +  & S_0\,  p_n   &  \to 0\\
 \C^{-1} B \bu_n   &  - & \lambda S_0  p_n & \to 0. 
\end{array}
\end{equation}
This implies that  $ S_0  p_n  \to 0$, and consequently $ p_n  \to 0$. From the first part of \eqref{eq:sis1n} we can also conclude that $\bu_n \to 0$. The  convergence $(\bu_n, p_n) \to (0, 0)$ contradicts $\left \|  \begin{pmatrix}  \bu_n  \\ p_n \\ \end{pmatrix} \right \| =1$. Thus, $\sigma_c(T_{S_0})=\emptyset $ and the  proof of \eqref{eq:SpT_S} is complete.

To prove \eqref{eq:SpT},  we start by observing that, as in the previous case,  \\ $\lambda=1 \in \sigma_p(T)$. If  $\lambda \neq 1$  is any other spectral value of $\sigma(T)$, then  there exists a sequence   
$ \begin{pmatrix}  \bu_n  \\ p_n \\ \end{pmatrix}  \in \bV \times Q $,  such that 

\[
\left \|  \begin{pmatrix}  \bu_n  \\ p_n \\ \end{pmatrix} \right \| =1\ \ \text{ and } \ \   \left \| T  \begin{pmatrix}  \bu_n  \\ p_n \\ \end{pmatrix} -  \lambda  \begin{pmatrix}  \bu_n  \\ p_n \\ \end{pmatrix} \right \|  \to 0, \ \ \text{as} \ \ n\to \infty. 
\]
The convergence part of the above statement implies  
\begin{equation}\label{eq:sis3n}
\begin{array}{ll}
 \A^{-1} B^* p_n   +( 1 -\lambda) \bu_n & \to 0,\\
 \C^{-1} B \bu_n     -\lambda  p_n & \to 0.   
\end{array}
\end{equation}
From the first equation of \eqref{eq:sis3n} and 
$|\bu_n|^2 + \|p_n\|^2 =1$ we get that 
\[
1 = \frac{1}{(\lambda -1)^2} \|p_n\|_{S_0}^2 + \|p_n\|^2 \leq \left ( \frac{M^2}{(\lambda -1)^2 } +1\right ) \|p_n\|^2.  
\] 
From  \eqref{eq:sis3n} we obtain that  
\[
S_0\,  p_n -\lambda(\lambda -1) p_n \to 0. 
\]
Thus, using the last two statements and Proposition \ref{prop:3} for   characterizing  the spectral values of $S_0$, we  obtain that $\lambda(\lambda -1) \in \sigma(S_0) \subset [m^2, M^2]$, which proves  the right inclusion of  \eqref{eq:SpT}. To complete the proof  of  \eqref{eq:SpT},  we have to show  that $\lambda^{\pm}_m$ and  
$\lambda^{\pm}_M \in  \sigma(T)$.  %First, we prove that  $\lambda^{\pm}_m \in  \sigma(T)$. 
From \eqref{eq:sigmaS0}, we have that $m^2 \in \sigma(S_0)$. In light of  Proposition \ref{prop:3},  we can find a sequence $(p_n) \subset Q$ such that 
\[
\|p_n\|=1 \Forall  n, \ \text{and} \ \ S_0\,  p_n - m^2\,  p_n \to 0. 
\]

Then, if we define $\bu_n : = \frac{1}{ \lambda^{\pm}_m} \A^{-1}B^*p_n $,  it is easy to check that 
\[
|\bu_n|^2 + \|p_n\|^2 \geq   \frac{m^2}{(\lambda_m^{\pm} -1)^2 } +1,  \Forall n, 
\]
and
\[
 \left \| T  \begin{pmatrix}  \bu_n  \\ p_n \\ \end{pmatrix} -  \lambda  \begin{pmatrix}  \bu_n  \\ p_n \\ \end{pmatrix} \right \|  \to 0, \ \ \text{as} \ \ n\to \infty. 
\]
This proves that $\lambda^{\pm}_m  \in  \sigma(T)$. The proof of 
$\lambda^{\pm}_M \in  \sigma(T)$ is similar. 
\end{proof}

%%%%%%%%%%%%%%%%%%%%%%%%%%%%%%%%%%%%%%%%%%%%%%%%%%
%777
%%%%%%%%%%%%%%%%%%%%%%%%%%%%%%%%%%%%%%%%%%%%%%%%%%%

 \section{Conclusion} \label{section:conclusion} 

  We presented sharp  stability  and approximation estimates  for a general class of  symmetric saddle point  variational systems. The estimates are based on spectral description of the continuous and discrete symmetric operators that represent  the systems. The  spectrum characterization we provided  is an useful tool for analysis at  continuous or  discrete levels, and can be applied sucesfully  to  the convergence analysis of  iterative methods that are aiming directly to the solution of a continuous saddle point problem. Example of such iterative methods include the Uzawa type algorithms of \cite{B09,  B14, BM12, BacShu13, bansch-morin-nochetto}.
   Using a Schur complement norm for the second variable of a saddle point system with a coercive symmetric form $a$ and a mixed form $b$ satisfying an $\inf-\sup$ condition, we established that the ratio between the norm of the data and the norm of the solution lies in  $  [\frac{1}{\varphi}, \varphi]$, where $\varphi$ is the  {\it golden ratio}. 
%This particular conclusion that relates to the {\it golden ratio}, 
%If this estimate  has any connection with the natural phenomena  governed by the golden ratio, remains to be reflected on. %Nevertheless, 
%The presented results are optimal, and the author anticipates that other applications  to it will follow.

 {\bf Note:} A slightly different  version of this note was originally submitted to Numerische Mathematik on August 21, 2013. %The author would like to thank to all the referees and editors that helped improve the original version to the present one.  especially all comments about the historical evolution of the main results about the  saddle point systems. 
 
%%%%%%%%%%%%%%%%%%%%%%%%%%%%%%%%%%%%%%%%%%%%%%%%%%
%888
%%%%%%%%%%%%%%%%%%%%%%%%%%%%%%%%%%%%%%%%%%%%%%%%%%%
% \bibliography{BacutaBib2013}
\bibliography{NSF2014April}

%%%%%%%%%%%%%%%%%%%%
\def\cprime{$'$} \def\ocirc#1{\ifmmode\setbox0=\hbox{$#1$}\dimen0=\ht0
  \advance\dimen0 by1pt\rlap{\hbox to\wd0{\hss\raise\dimen0
  \hbox{\hskip.2em$\scriptscriptstyle\circ$}\hss}}#1\else {\accent"17 #1}\fi}

%%%%%%%%%%%%%%%%%%%%%%%%%%%%
\end{document}